\documentclass[12pt]{amsart}

\usepackage{amssymb,epsf}

\newtheorem{thm}{Theorem}[section] 

\newtheorem{cor}[thm]{Corollary}

\newtheorem{lem}[thm]{Lemma}
\newtheorem{exa}[thm]{Example}

\newtheorem{algo}[thm]{Algorithm}


\newcommand{\R}{\mathbb R}

\begin{document}

\title[Convex hull and the separating matrix]{Towards the computation of the convex hull of a configuration from its corresponding separating matrix}
\author{Elie Feder and David Garber}
\address{Kingsborough Community College of CUNY, Dept. of Mathematics and Computer Science, 2001 
Oriental Blvd., Brooklyn, NY 11235, USA}
\email{davball@aol.com, efeder@kbcc.cuny.edu}

\address{Department of Applied Mathematics, Holon Institute of Technology, 52 Golomb St., PO Box 305, 58102 Holon, Israel}
\email{garber@hit.ac.il}

\begin{abstract}
In this paper, we cope with the following problem: compute the size of the 
convex hull of a configuration $\mathcal C$, where the given data is the number of 
separating lines between any two points of the configuration (where the lines are generated 
by pairs of other points of the configuration). 

We give an algorithm for the case that the convex hull is of size $3$, and a partial algorithm 
and some directions for the case that the convex hull is of size bigger than $3$.  
\end{abstract}

\maketitle

\section{Introduction}
A finite set $\mathcal P = \{P_1,\cdots, P_n\}$ of $n$ points
in the oriented affine plane $\R^2$ is a {\it configuration in
general position} if three points in $\mathcal P$ are never collinear.
Two configurations of $n$ points in general position $\mathcal P^1$ and
$\mathcal P^2$ are {\it isotopic} if they can be joined by a continuous
path of configurations in general position.

A line $L \subset \R^2$ {\it separates} two points $P,Q \in \R^2\setminus L$
if $P$ and $Q$ are in different connected components of $\R^2 \setminus L$.
Given a configuration in general position $\mathcal P$, we denote by $n(P,Q)$
the number of separating lines defined by pairs of points in $\mathcal P
\setminus \{P,Q\}$.

Given a configuration in general position $\mathcal P$, we define
{\it the separating matrix of a configuration}, $S(\mathcal P)$,
to be the symmetric matrix of order $n$, defined by:
$$(S(\mathcal P))_{ij}=n(P_i,P_j)$$

The interesting question is which data we can retrieve
from this separating matrix. In this paper, we introduce an algorithm
that partially computes the convex hull from the separating matrix.

\medskip

There are matrices associated to planar configurations of points which determine 
the configurations, for example the $\lambda$-matrix which was defined by Goodman and 
Pollack \cite{GP}. An interesting question is to study the connection 
between these two matrices.

\medskip

The paper is organized as follows. In Section \ref{observations}, we give some simple 
observations regarding the separating matrix, which yield some restriction on this matrix.
In Section \ref{size3} we present an algorithm for computing the convex hull 
of a configuration, in case that its size is $3$. 
In Section \ref{general}, we give partial algorithm for computing the convex hull in case 
that its size is bigger than $3$. We also give possible directions for solving this problem.

\section{Simple observations about the separating matrices associated to configurations}\label{observations}

In this section, we point out some properties of the separating matrix.

\begin{lem} Given a separating matrix which represents a
configuration of $n$ points. Then the maximal entry of this matrix
is ${n-2 \choose 2}$.
\end{lem}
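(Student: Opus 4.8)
The plan is to split the statement into the (easy) upper bound and the (substantive) claim that the bound is actually attained. For the upper bound, note that for a fixed pair $P_i, P_j$ the lines entering the definition of $n(P_i,P_j)$ are exactly those spanned by pairs chosen from the remaining $n-2$ points $\mathcal P \setminus \{P_i,P_j\}$. Since the configuration is in general position, no three points are collinear, so distinct pairs span distinct lines and there are precisely ${n-2 \choose 2}$ candidate lines. As $n(P_i,P_j)$ counts a subset of them, we immediately get $n(P_i,P_j)\le {n-2 \choose 2}$, and hence every entry of $S(\mathcal P)$ is at most ${n-2 \choose 2}$.

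It remains to exhibit a configuration and a pair $P,Q$ for which all ${n-2 \choose 2}$ of these lines separate $P$ and $Q$; this is the real content. I would place the $n-2$ spanning points on a very flat convex arc along the $x$-axis, for instance $R_k=(k,\varepsilon k^2)$ for $k=1,\dots,n-2$, and place the distinguished pair on the $y$-axis at $P=(0,1)$ and $Q=(0,-1)$. Points on a parabola are in convex position with no three collinear, so this is a legitimate configuration in general position for every small $\varepsilon$, provided one also checks that $P$ and $Q$ are not collinear with any two of the $R_k$.

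The key computation is that the line through $R_i$ and $R_k$ has slope $\varepsilon(i+k)$ and $y$-intercept exactly $-\varepsilon ik$; choosing $\varepsilon<1/(n-2)^2$ forces every such intercept to lie in the open interval $(-1,1)$. Consequently each spanning line meets the $y$-axis strictly between $Q$ and $P$, placing $P$ above it and $Q$ below it, so it separates them. Therefore $n(P,Q)={n-2 \choose 2}$ for this configuration, which together with the upper bound proves the claim. The main obstacle is not any single estimate but organizing the construction so that general position and the separation property hold simultaneously; the flat-parabola choice is convenient precisely because it excludes near-vertical spanning lines (which would fail to separate two points sharing an $x$-coordinate) while keeping all the $y$-intercepts uniformly small.
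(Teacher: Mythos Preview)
Your argument is correct, but you have read the lemma more strongly than the paper intends and have proved more than the paper does. In the paper, the entire proof is the one-line observation that the lines counted by $n(P_i,P_j)$ are spanned by pairs from the $n-2$ remaining points, and there are $\binom{n-2}{2}$ such pairs; this is exactly your ``easy upper bound'' paragraph. The subsequent corollary in the paper (``all entries should be smaller or equal than $\binom{n-2}{2}$'') confirms that only the inequality $n(P_i,P_j)\le\binom{n-2}{2}$ was ever being claimed.

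Your additional flat-parabola construction showing that the bound is actually attained is a genuine addition: it upgrades the statement from an upper bound to a sharp bound. The construction is sound---the $y$-intercept computation $-\varepsilon ij$ is right, the choice $\varepsilon<1/(n-2)^2$ forces every spanning line to meet the $y$-axis strictly between $P$ and $Q$, and general position is easily verified. So your proposal is not a different route to the same result; it is the paper's argument plus an extra (and correct) attainment claim that the paper neither states nor proves.
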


\begin{proof}
The separating lines are generated from pairs of the remaining points. 
There are $n-2$ such points, and hence there are ${n-2 \choose 2}$ such pairs.  
\end{proof}

The next point is how many odd and even entries we have in this matrix.
For this we recall the Orchard relation (see \cite{B} and \cite{BG1}). Given a planar 
configuration of $n$ points, we say that two points $P,Q$ are equivalent if
$n(P,Q) \equiv (n-1) \pmod 2$. We have shown that this is an equivalence relation 
with at most two equivalence classes \cite{B,BG1}.   

\begin{lem}

\begin{enumerate}
\item If $n$ is even, then there are $i(n-i)$ even entries in the 
upper triangle part (excluding the diagonal) of the separating matrix 
for some $0 \leq i \leq n$. The rest of the entries are odd.
\item If $n$ is odd, then there are $i(n-i)$ odd entries in the 
upper triangle part (excluding the diagonal) of the separating matrix 
for some $0 \leq i \leq n$. The rest of the entries are even.
\end{enumerate}

\end{lem}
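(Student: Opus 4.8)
The plan is to read off both parts directly from the Orchard equivalence relation recalled just above, whose only nontrivial content---that it is an equivalence relation with at most two classes---I am entitled to assume. The key observation is that the defining condition $n(P,Q) \equiv n-1 \pmod 2$ is exactly a statement about the parity of the matrix entry $(S(\mathcal P))_{ij}$, and that the parity to which it translates flips according to the parity of $n$.

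First I would make this translation explicit. When $n$ is even, $n-1$ is odd, so $P_i$ and $P_j$ are equivalent precisely when $n(P_i,P_j)$ is odd; when $n$ is odd, $n-1$ is even, so they are equivalent precisely when $n(P_i,P_j)$ is even. In either case the entries indexed by pairs inside a single equivalence class all share one parity, while the entries indexed by pairs whose points lie in different classes share the opposite parity.

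Next I would introduce the sizes of the (at most two) classes as $i$ and $n-i$ with $0 \le i \le n$, adopting the convention that $i=0$ (equivalently $i=n$) handles the degenerate case of a single class. The upper-triangle entries then split into three groups: pairs inside the first class, pairs inside the second class, and pairs straddling the two classes. The number of straddling pairs is $i(n-i)$, and as a consistency check ${i \choose 2} + {n-i \choose 2} + i(n-i) = {n \choose 2}$ accounts for every upper-triangle entry.

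Finally I would match parities. For $n$ even the straddling pairs are exactly the even entries, yielding $i(n-i)$ even entries with the remaining ones odd, which is part (1); for $n$ odd the straddling pairs are exactly the odd entries, yielding $i(n-i)$ odd entries with the remaining ones even, which is part (2). I expect essentially no obstacle beyond bookkeeping, since the substantive fact---that the relation has at most two classes---is already in hand; the only point requiring care is the correct handling of the single-class case through the $i=0$ convention.
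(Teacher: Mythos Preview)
Your proposal is correct and follows essentially the same route as the paper: invoke the Orchard relation's two equivalence classes of sizes $i$ and $n-i$, observe that cross-class pairs are exactly those with $n(P,Q)\not\equiv n-1\pmod 2$, and count them as $i(n-i)$. Your version is in fact more carefully written than the paper's, which dispatches the odd-$n$ case with ``the proof is identical'' and does not spell out the single-class convention.
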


\begin{proof}
Assume $n$ is even. By the definition of the Orchard relation, 
two points in different equivalence classes have an even number of separating lines. 
If one class has $i$ points, then the other has $n-i$ points, thus there are $i(n-i)$ 
pairs of points in diffferent classes, and therefore, $i(n-i)$ even entries. All the 
rest correspond to pairs of points from the same equivalence class, and hence have
odd entries.

For the case that $n$ is odd, the proof is identical.
\end{proof}

\begin{exa}
For $n=6$, the upper triangular part (excluding the diagonal) has 15 entries. 
The options for the number of even entries are: 
$6 \cdot (6-6)=0, 5 \cdot (6-5)=5, 4 \cdot (6-4)=8, 3 \cdot (6-3)=9$.
\end{exa}

One more check, based on the Orchard relation, is the following:

\begin{lem}
Let $S(\mathcal P)$ be the separating matrix of a configuration $P$.
\begin{enumerate}
\item If $(S(\mathcal P))_{ij} \equiv (S(\mathcal P))_{jk} \pmod 2$,  
then  $(S(\mathcal P))_{ik} \equiv (n-1) \pmod 2$.
\item If  $(S(\mathcal P))_{ij} \not\equiv (S(\mathcal P))_{jk} \pmod 2$, 
then $(S(\mathcal P))_{ik} \equiv n \pmod 2$
\end{enumerate}

\end{lem}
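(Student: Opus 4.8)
The plan is to reduce both claims to the equivalence relation supplied by the Orchard relation, since the statement is essentially a restatement of transitivity (and its failure) in terms of parity modulo $2$ and the ambient parity of $n$.

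The plan is to reduce both statements to the equivalence-relation structure of the Orchard relation already established in the excerpt. First I would translate the parity conditions into the language of equivalence: since $n(P,Q)$ is congruent modulo $2$ either to $n-1$ or to $n$, and these are the two distinct residues mod $2$, the condition $(S(\mathcal P))_{ij} \equiv (n-1) \pmod 2$ is exactly the statement that $P_i$ and $P_j$ are equivalent, while $(S(\mathcal P))_{ij} \equiv n \pmod 2$ is exactly the statement that they are inequivalent. Thus each off-diagonal parity in the matrix records whether the corresponding pair of points lies in the same Orchard class.

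With this dictionary in place, I would prove part (1) by a short case analysis. Suppose $(S(\mathcal P))_{ij} \equiv (S(\mathcal P))_{jk} \pmod 2$. If both entries are $\equiv n-1$, then $P_i$ is equivalent to $P_j$ and $P_j$ is equivalent to $P_k$, so transitivity gives that $P_i$ is equivalent to $P_k$, and hence $(S(\mathcal P))_{ik} \equiv (n-1) \pmod 2$. If instead both entries are $\equiv n$, then $P_i$ and $P_k$ are each inequivalent to $P_j$; here I would invoke the fact that there are at most two equivalence classes, so $P_i$ and $P_k$ both lie in the single class distinct from that of $P_j$, forcing them to be equivalent and giving again $(S(\mathcal P))_{ik} \equiv (n-1) \pmod 2$.

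For part (2), suppose the two entries have opposite parity; without loss of generality $P_i$ is equivalent to $P_j$ while $P_j$ is inequivalent to $P_k$. Then $P_k$ lies in a class different from that of $P_j$, whereas $P_i$ lies in the same class as $P_j$, so $P_i$ and $P_k$ are in different classes and $(S(\mathcal P))_{ik} \equiv n \pmod 2$. The symmetric subcase is identical.

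The only subtle point, and thus the main obstacle (though a mild one), is the second subcase of part (1): transitivity by itself does not relate two points that are each merely known to be inequivalent to a common third point. The essential input there is the previously proved bound of \emph{at most two} equivalence classes, which is precisely what converts ``both differ from $P_j$'' into ``equal to each other.'' One could alternatively package all of these cases uniformly by assigning to each point $P$ a class label $\ell(P) \in \{0,1\}$ and verifying that $n(P,Q) \equiv (n-1) + \ell(P) + \ell(Q) \pmod 2$; both parts then follow at once by adding the three relations for the pairs $(P_i,P_j)$, $(P_j,P_k)$, $(P_i,P_k)$, but I find the case analysis above makes the role of the two-class hypothesis most transparent.
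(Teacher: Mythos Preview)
Your proof is correct and follows essentially the same approach as the paper: both translate the parity conditions into the Orchard equivalence-class language and then run the same case analysis using transitivity together with the fact that there are at most two classes. Your remark isolating the role of the two-class bound in the ``both $\equiv n$'' subcase, and your alternative $\ell(P)\in\{0,1\}$ formulation, are nice additions but do not change the underlying argument.
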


\begin{proof} 
(1) If $(S(\mathcal P))_{ij} \equiv (S(\mathcal P))_{jk} \equiv n \pmod 2$, 
then $P_i$ and $P_j$ are in different classes, and $P_j$ and $P_k$ are in different classes. 
This implies that $P_i$ and $P_k$ are in the same class. 

On the other hand, if $(S(\mathcal P))_{ij} \equiv (S(\mathcal P))_{jk} \equiv (n-1) \pmod 2$, 
then $P_i$ and $P_j$ are in the same class, and $P_j$ and $P_k$ are in the same class. 
Therefore, $P_i$ and $P_k$ are in the same class too. 
 
(2) If  $(S(\mathcal P))_{ij} \not\equiv (S(\mathcal P))_{jk} \pmod 2$, then we have two cases:
\begin{enumerate}
\item[(a)] $P_i$ and $P_j$ are in the same class, while $P_j$ and $P_k$ are 
in different classes.
\item[(b)] $P_i$ and $P_j$ are in different classes, while $P_j$ and $P_k$ are 
in the same class.
\end{enumerate} 
In both cases, we can conclude that $P_i$ and $P_k$ are in different classes, 
and the result follows.
\end{proof}

Hence, we have the following corollary:
\begin{cor}
Necessary conditions for a matrix to be a separating matrix of a
configuration are:
\begin{enumerate}
\item It should be  symmetric with diagonal $0$.

\item All entries should be smaller (or equal) than ${n-2 \choose
2}$

\item The matrix should satisfy the conditions of the last two lemmas.
\end{enumerate}
\end{cor}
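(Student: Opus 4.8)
The plan is to verify each of the three listed conditions in turn, drawing on the definition of the separating matrix together with the three lemmas already established in this section. Since every necessary condition corresponds directly to a result proven above, the proof amounts to assembling these pieces and checking that no hypothesis has been smuggled in.

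First I would dispatch condition (1). Symmetry is immediate from the definition of $n(P,Q)$: a line $L$ separates $P$ and $Q$ precisely when it separates $Q$ and $P$, since ``lying in distinct connected components of $\R^2 \setminus L$'' is a symmetric relation on pairs of points. Hence $n(P_i,P_j) = n(P_j,P_i)$, so $(S(\mathcal P))_{ij} = (S(\mathcal P))_{ji}$, which is already recorded in the definition. The vanishing of the diagonal is a matter of convention: $(S(\mathcal P))_{ii}$ would count lines separating $P_i$ from itself, of which there are none, so each diagonal entry equals $0$.

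Next, condition (2) is exactly the content of the first lemma of this section: any entry of $S(\mathcal P)$ counts the separating lines generated by pairs drawn from the remaining $n-2$ points, and there are only ${n-2 \choose 2}$ such pairs, so no entry can exceed this bound. Condition (3) is then simply the conjunction of the second and third lemmas, quoted verbatim as necessary conditions: the second forces the number of even (respectively odd) off-diagonal entries in the upper triangle to take the form $i(n-i)$ dictated by the sizes of the two Orchard equivalence classes, and the third constrains the parities along every index triple $\{i,j,k\}$.

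The closest thing to an obstacle here is not a computation but a matter of phrasing. This corollary asserts \emph{necessity} only: a matrix may pass all three tests without arising from any genuine configuration in general position, since the parity and magnitude constraints are far from pinning down the combinatorial type. I would therefore take care to present the argument strictly as a one-directional verification, collecting the lemmas rather than claiming any converse, and would resist the temptation to suggest that these conditions characterize separating matrices.
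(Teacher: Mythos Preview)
Your proposal is correct and matches the paper's approach exactly: the paper states this corollary without any explicit proof, introducing it simply with ``Hence, we have the following corollary,'' treating each item as an immediate consequence of the definition and the three preceding lemmas. Your write-up just spells out this assembly, and your caution that only necessity is being asserted is appropriate.
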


\section{Convex hull of size $3$}\label{size3}

We start with the simplest case, where the size of the convex hull is $3$.
We present the algorithm for the case of convex hull of size $3$. 

\begin{algo}
\begin{enumerate}
\item Choose $i,j,k$ such that $1 \leq i<j<k \leq n$
\item If $(S(\mathcal P))_{ij} + (S(\mathcal P))_{ik} + (S(\mathcal P))_{jk} = n^2 -4n+3$, then return: ''Convex hull is of size $3$ and it is $P_i,P_j,P_k$''.
\item If for all $1 \leq i<j<k \leq n$, $(S(\mathcal P))_{ij} + (S(\mathcal P))_{ik} + (S(\mathcal P))_{jk} \neq n^2 -4n+3$, then 
return: ''Convex hull of size larger than $3$''.
\end{enumerate}
\end{algo}
 
\subsection{The correctness of the algorithm}
The correctness of the algorithm is based on the following lemmas.

Let us assume we have a configuration with convex hull points $P_i, P_j, P_k$.
In our matrix $(S(\mathcal P))_{ij} = n(P_i,P_j)$ is the number of lines separating
$P_i$ and $P_j$, for all $i$ and $j$. 

First, we will show that if the convex hull is of size $3$, then the sum of separating lines 
on the convex hull is indeed $n^2 -4n+3$:  

\begin{lem}
Let $C$ be a configuration of $n$ points. Assume that the points $P_i, P_j, P_k$ form its 
convex hull. Then:  
$$(S(\mathcal P))_{ij} + (S(\mathcal P))_{ik} + (S(\mathcal P))_{jk} = n^2 -4n+3.$$ 
\end{lem}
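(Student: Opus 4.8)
The plan is to compute the sum $\Sigma := (S(\mathcal P))_{ij} + (S(\mathcal P))_{ik} + (S(\mathcal P))_{jk}$ directly, by tallying the contribution of each potential separating line. Recall that each of the three terms counts separating lines generated by pairs drawn from all points \emph{except} the two being separated; in particular, a line through a pair $\{X,Y\}$ contributes to a term $n(P,Q)$ only when $\{X,Y\} \cap \{P,Q\} = \emptyset$ and the line $XY$ actually separates $P$ and $Q$. Since $P_i, P_j, P_k$ are the vertices of the convex hull, the remaining $n-3$ points all lie strictly inside the triangle $P_i P_j P_k$. I would classify the defining pairs $\{X,Y\}$ according to how many hull vertices they contain and add up the contribution of each class to $\Sigma$.

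First I would treat the pairs $\{X,Y\}$ with both $X,Y$ interior, of which there are ${n-3 \choose 2}$. Such a pair excludes none of $P_i, P_j, P_k$, so its line is eligible to contribute to all three terms. The geometric key is that a line through two points strictly inside a triangle meets the boundary exactly twice, on two distinct edges; the vertex common to those two edges is thereby separated from the other two, so the line separates exactly two of the three hull-vertex pairs. Hence each interior--interior line contributes exactly $2$, for a subtotal of $2{n-3 \choose 2} = (n-3)(n-4)$.

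Next I would treat the $3(n-3)$ pairs consisting of one hull vertex and one interior point, say $\{P_i, Y\}$ with $Y$ interior. Because this pair contains $P_i$, the line $P_iY$ is barred from the terms $n(P_i,P_j)$ and $n(P_i,P_k)$ and can only feed $n(P_j,P_k)$. The ray from $P_i$ through the interior point $Y$ exits the triangle through the opposite edge $P_jP_k$, so $P_iY$ separates $P_j$ and $P_k$; thus each such line contributes exactly $1$, for a subtotal of $3(n-3)$. Finally, the three pairs of hull vertices themselves, e.g. $\{P_i,P_j\}$, are excluded from every term, since $\{P_i,P_j\}$ shares a point with each of $\{P_i,P_j\}$, $\{P_i,P_k\}$, $\{P_j,P_k\}$; they contribute $0$. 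Adding the three subtotals yields $\Sigma = (n-3)(n-4) + 3(n-3) = (n-3)(n-1) = n^2-4n+3$, as claimed.

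I expect the only real subtlety to be the careful bookkeeping of eligibility, that is, determining in each class exactly which of the three terms a given line is permitted to feed under the exclusion rule, together with the elementary convexity argument giving the $2$-to-$1$ split of the hull vertices by an interior chord. A small sanity check, namely $n=4$ with a single interior point giving $\Sigma = 3 = 16-16+3$, would confirm that the accounting is consistent.
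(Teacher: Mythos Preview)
Your proof is correct and follows essentially the same approach as the paper: both classify the lines by how many hull vertices lie on them, assign a contribution of $2$ to each interior--interior line and $1$ to each hull--interior line, and sum to $(n-3)(n-4)+3(n-3)=(n-3)(n-1)$. Your version is a bit more explicit about the eligibility bookkeeping (which terms a given line may feed under the exclusion rule), but the decomposition and the geometric arguments are the same.
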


\begin{proof}
We have $n-3$ internal points (which are not on the convex hull). Each one of them 
contributes $3$ separating lines on the convex hull (by the lines generated by the internal point 
and the three points of the convex hull). Moreover, each pair of them 
contributes $2$ to the number of separating lines 
(by the line generated by this pair of points).
Hence we have the following number of separating lines on the convex hull:
$$3(n-3)+2 \cdot \; {(n-3)(n-4) \over 2}=(n-3)(n-1)=n^2-4n+3$$
as needed.
\end{proof}

Now, we will show that for any triple of points $P_i,P_j,P_k$, which is not 
the convex hull, we have:  
$(S(\mathcal P))_{ij} + (S(\mathcal P))_{ik} + (S(\mathcal P))_{jk} < n^2 -4n+3$.

\begin{lem}\label{non-ch-position-lemma}
Let $C$ be a configuration of $n$ points with a convex hull of size $3$. 
Let $P_i, P_j, P_k$ be a triple of points which is not the convex hull.
Then:  
$(S(\mathcal P))_{ij} + (S(\mathcal P))_{ik} + (S(\mathcal P))_{jk} < n^2 -4n+3$. 
\end{lem}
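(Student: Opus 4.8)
The plan is to turn the counting argument of the preceding lemma into an \emph{upper bound} valid for every triple, and then to show that the bound is attained only by the convex hull. Write $\Sigma := (S(\mathcal P))_{ij} + (S(\mathcal P))_{ik} + (S(\mathcal P))_{jk}$, and sort the lines counted by $\Sigma$ according to how many of the three chosen points lie on them. A line through two of $P_i,P_j,P_k$ contributes $0$. A line through exactly one of them, say through $P_i$ and another point $R$, can contribute at most $1$, namely to the term $(S(\mathcal P))_{jk}$, and it does so precisely when $\overline{P_iR}$ separates $P_j$ and $P_k$. A line through two points $R,R'$ distinct from all three can separate either $0$ or $2$ of the three pairs (never all three, since three points split by a line lie $3$--$0$ or $2$--$1$), hence contributes at most $2$. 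Summing the maxima over the $3(n-3)$ ``mixed'' pairs and the $\binom{n-3}{2}$ ``outer'' pairs reproduces exactly $3(n-3) + 2\binom{n-3}{2} = n^2 - 4n + 3$, so $\Sigma \le n^2 - 4n + 3$ for \emph{any} triple.

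Next I would characterize equality. Since $\Sigma$ is a sum of terms each bounded by its maximum, $\Sigma = n^2-4n+3$ forces in particular every mixed line to be separating: for each chosen vertex, the line joining it to \emph{every} other point of $\mathcal P$ must separate the opposite two vertices. The heart of the matter is then a purely geometric claim: a point $R$ has the property that each of $\overline{P_iR}$, $\overline{P_jR}$, $\overline{P_kR}$ separates the opposite pair among $\{P_i,P_j,P_k\}$ \emph{if and only if} $R$ lies in the open triangle bounded by $P_i,P_j,P_k$.

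To prove the claim I would describe, for each vertex, the locus where the corresponding line separates the opposite pair: the set of $R$ for which $\overline{P_iR}$ separates $P_j,P_k$ is the union $V_i = C_i^+ \cup C_i^-$ of the cone $C_i^+$ at $P_i$ spanned by the rays towards $P_j$ and $P_k$ and its vertical opposite cone $C_i^-$, and $C_i^+$ is itself the intersection of the two edge half-planes through $P_i$ that contain the triangle. Expanding $V_i \cap V_j \cap V_k$ into its eight cone intersections, a short check shows that only $C_i^+ \cap C_j^+ \cap C_k^+$ survives --- it equals the triangle interior, by the standard description of a triangle as the intersection of its three inner edge half-planes --- while the remaining seven are empty, each either demanding opposite sides of a common edge-line or a point lying beyond all three edges at once. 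With the claim in hand the lemma follows quickly: if $\{P_i,P_j,P_k\}$ is not the convex hull then, since the hull has exactly three vertices, at least one point $R_0 \in \mathcal P$ with $R_0 \neq P_i,P_j,P_k$ lies outside the closed triangle (general position rules out the boundary); hence $R_0 \notin V_i \cap V_j \cap V_k$, so one of $\overline{P_iR_0},\overline{P_jR_0},\overline{P_kR_0}$ fails to separate its opposite pair, the corresponding mixed contribution drops below its maximum $n-3$, and therefore $\Sigma < n^2-4n+3$.

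The step I expect to be the main obstacle is the geometric characterization of $V_i \cap V_j \cap V_k$ --- showing that the only points from which all three vertex-lines simultaneously separate the opposite pairs are the interior points of the triangle. Getting the half-plane bookkeeping right, in particular certifying that all seven non-interior cone intersections are genuinely empty rather than merely degenerate, is the delicate part; once that is settled, the reduction to ``some configuration point lies outside the triangle'' and the consequent strict drop in one of the mixed terms are routine.
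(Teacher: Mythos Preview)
Your argument is correct and follows the same route as the paper: establish $n^2-4n+3$ as an upper bound for any triple, note that a non-hull triple admits an external point $R_0$, and observe that at least one of the lines $\overline{P_\ell R_0}$ fails to separate the opposite pair, forcing a strict drop. The paper compresses your cone analysis of $V_i\cap V_j\cap V_k$ into the single phrase ``a simple observation shows,'' so your version is a fleshed-out form of the same proof rather than a different one.
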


\begin{proof} 
First, notice that from the previous lemma we can derive that 
$n^2-4n+3$ is the highest possible amount of separating lines
for a boundary of any triangle.
Now, consider a triple of points $P_i,P_j,P_k$ which are not the convex
hull of the configuration. Hence, there is a point $P$ which is outside the triangle 
generated by $P_i,P_j,P_k$. A simple observation shows that at least one of the 
lines generated by $P$ and the one of the points $P_i,P_j,P_k$ does not cross 
the boundary of the triangle, and hence the total number of separating lines 
is strictly smaller than $n^2-4n+3$.   
\end{proof}

By similar argument to that of the previous lemma, we have: 
\begin{lem}
Let $C$ be a configuration of $n$ points with a convex hull of size larger than $3$. 
Let $P_i, P_j, P_k$ be a triple of points.
Then:  
$$(S(\mathcal P))_{ij} + (S(\mathcal P))_{ik} + (S(\mathcal P))_{jk} < n^2 -4n+3.$$ 
\ \hfill $\qed$
\end{lem}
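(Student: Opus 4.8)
The plan is to reduce the statement to the argument already carried out in Lemma \ref{non-ch-position-lemma}. Recall from the proof of the first lemma of this subsection that, for an arbitrary triple $P_i,P_j,P_k$, the sum $(S(\mathcal P))_{ij} + (S(\mathcal P))_{ik} + (S(\mathcal P))_{jk}$ counts each line joining a vertex of the triple to one of the remaining $n-3$ points at most once (for the opposite edge), and each line joining two of the remaining points at most twice, giving the uniform upper bound $3(n-3) + 2{n-3 \choose 2} = n^2-4n+3$. This bound is attained exactly when every such line contributes its maximal amount, which forces all remaining points to lie inside the triangle $P_iP_jP_k$; in other words, equality can hold only when the triple is the whole convex hull.

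First I would verify that, under the present hypothesis, no triple can be the whole convex hull. Since the convex hull has more than three vertices, given any triple $P_i,P_j,P_k$ there is at least one hull vertex $P$ not among them. Being an extreme point of $\mathcal P$, the vertex $P$ does not lie in the convex hull of any subset of the other points, and in particular not inside the triangle $P_iP_jP_k$; by general position it then lies strictly outside this triangle. Thus every triple admits a point lying strictly outside its triangle.

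With such an external point $P$ in hand, I would invoke verbatim the geometric observation from the proof of Lemma \ref{non-ch-position-lemma}: at least one of the three lines $PP_i$, $PP_j$, $PP_k$ does not cross the boundary of the triangle, so that at least one of the corresponding vertex-to-$P$ contributions falls short of its maximal value. Consequently the total is strictly below $n^2-4n+3$, as claimed.

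The only genuinely new ingredient compared with Lemma \ref{non-ch-position-lemma} is the guarantee of an external point, which I expect to be routine once the extremality of the hull vertices is used. The step I would treat most carefully is the reuse of the ``simple observation'' that an external point fails to separate at least one pair among $P_i,P_j,P_k$; making this fully rigorous amounts to checking, region by region (the three vertex regions and three edge regions cut out by the extended sides of the triangle), that a point outside the triangle always has at least one of the three lines to the vertices missing the triangle's interior. This modest case analysis is the main obstacle.
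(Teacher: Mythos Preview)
Your proposal is correct and follows exactly the route the paper intends: the paper does not give a separate argument but simply remarks that the proof is ``by similar argument to that of the previous lemma,'' and you have spelled out precisely that argument, supplying the one extra observation that a convex hull of size larger than $3$ guarantees an extreme point outside every triangle.
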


Hence, for determining if a given matrix corresponds to a configuration whose size 
of its
convex hull is $3$, we have to do the following: For each  triple of indices $i, j, k$, 
compute $(S(\mathcal P))_{ij} + (S(\mathcal P))_{ik} +(S(\mathcal P))_{jk}$. 
If for a triple $i,j,k$, we get the maximal value $n^2-4n+3$, 
then the configuration has a convex hull of size $3$,
which consists of the points $P_i, P_j$, and $P_k$. 
If for all triples of indices $i, j, k$, 
we get $$(S(\mathcal P))_{ij} + (S(\mathcal P))_{ik} +(S(\mathcal P))_{jk}<n^2-4n+3,$$ 
then the configuration has a convex hull of size bigger than $3$.

\subsection{Complexity}

It is easy to see that the complexity is O($n^3$), 
since we have to check ${n \choose 3}$ triples of points.

\section{The general case}\label{general}

For the general case, we have some partial results.

We start with the expected number of separating lines on the convex hull of size $k$.

\begin{lem}
Let $C$ be a configuration of $n$ points. Assume that the points 
$P_{i_1}, P_{i_2}, \cdots , P_{i_k}$ form its convex hull in this order. 
Then:  
$$(S(\mathcal P))_{i_1 i_2} + (S(\mathcal P))_{i_2 i_3} + \cdots + 
(S(\mathcal P))_{i_{k-1} i_k} + (S(\mathcal P))_{i_k i_1} = n^2 -(k+1)n+k.$$
\end{lem}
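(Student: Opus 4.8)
The plan is to generalize the counting argument that proved the size-$3$ case, summing contributions to the separating lines along the boundary edges of the convex $k$-gon. The boundary now consists of $k$ edges $P_{i_1}P_{i_2}, \dots, P_{i_k}P_{i_1}$, and I want to count, for each of the $\binom{n-2}{2}$ lines generated by a pair of the remaining points, how many of the $k$ boundary edges it separates. Then I sum over all generating pairs, classifying the pairs by how many of their two endpoints are convex-hull vertices versus interior points.

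First I would fix notation: there are $k$ hull vertices and $n-k$ interior points. A line generated by two points crosses the boundary of the convex polygon in either $0$ or $2$ points (since the boundary is a convex closed curve and three points are never collinear), and when it crosses in two points it separates exactly those boundary edges whose endpoints lie on opposite sides — contributing $2$ to the total count $(S)_{i_1 i_2} + \cdots + (S)_{i_k i_1}$ precisely when the line enters and exits the polygon through two distinct edges. So the key quantity for each generating line is whether it passes through the interior of the polygon, in which case it contributes $2$ to the boundary sum, or misses it, contributing $0$.

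Next I would partition the generating pairs into three types and count the interior-crossing lines in each. For a pair of two interior points, the line through them always cuts across the polygon, so every such pair contributes $2$; there are $\binom{n-k}{2}$ of these, mirroring the $2\cdot\binom{n-3}{2}$ term from the $k=3$ lemma. For a pair consisting of one interior point $P$ and one hull vertex $V$, the line $PV$ always passes through the interior (since $P$ is inside), contributing $2$; there are $(n-k)k$ such pairs. For a pair of two hull vertices, the line through them is a diagonal (or edge) of the polygon: an edge of the hull separates no boundary edges and contributes $0$, while a genuine diagonal contributes $2$. The count of diagonals is $\binom{k}{2}-k$, and I would need to verify each diagonal contributes exactly $2$. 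Assembling the total gives $2\bigl[\binom{n-k}{2} + (n-k)k + \binom{k}{2} - k\bigr]$, which I expect simplifies to $n^2 - (k+1)n + k$; this algebraic simplification is routine and I would check it reduces correctly to the $k=3$ value $n^2-4n+3$ as a sanity check.

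The main obstacle is the hull-vertex/hull-vertex case: I must argue carefully that every diagonal of the convex polygon separates the two boundary arcs it divides and thus contributes exactly $2$ to the sum, while the $k$ hull edges contribute $0$. This requires the observation that a chord of a convex polygon through two non-adjacent vertices has boundary vertices strictly on both sides, so traversing the cyclic sequence of edges one crosses the chord's supporting line exactly twice. The interior-point cases are comparatively easy because any line through an interior point must exit the bounded convex region on both sides, forcing exactly two boundary crossings. Once these three contributions are pinned down, the result follows by summation and the routine algebraic identity.
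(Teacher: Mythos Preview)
Your counting framework is right, but the per-line contributions you assign in two of the three cases are wrong, and this is a genuine gap. The issue is that $n(P_i,P_j)$ only counts lines through pairs in $\mathcal P\setminus\{P_i,P_j\}$; hence a line $\ell$ through a hull vertex $V$ is automatically excluded from the count for the two boundary edges incident to $V$. Concretely: if $P$ is interior and $V$ is a hull vertex, the line $PV$ meets the boundary at $V$ and at one interior point of a single edge $(A,B)$ not incident to $V$; it separates only that edge, so its contribution is $1$, not $2$. Likewise, a diagonal through hull vertices $V,W$ meets the boundary only at $V$ and $W$; the only boundary edges whose endpoints lie on opposite sides of the diagonal are the four edges incident to $V$ or $W$, all of which are excluded, so the diagonal contributes $0$, not $2$. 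Your ``crosses the polygon in $0$ or $2$ points, hence contributes $0$ or $2$'' heuristic only applies to lines avoiding every hull vertex, i.e.\ the internal--internal case.

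With the corrected contributions the sum becomes
\[
2\binom{n-k}{2}+1\cdot k(n-k)+0\cdot\Bigl[\binom{k}{2}-k\Bigr]=(n-k)(n-1)=n^2-(k+1)n+k,
\]
which is exactly the paper's computation: each of the $n-k$ interior points, paired with each of the $k$ hull vertices, yields one separated boundary edge, and each interior pair yields two. Your version, $2\bigl[\binom{n-k}{2}+(n-k)k+\binom{k}{2}-k\bigr]$, simplifies to $n^2-n-2k$, which already disagrees with the $k=3$ formula $n^2-4n+3$ for every $n\ge 4$.
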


\begin{proof}
We have $n-k$ internal points (which are not on the convex hull). Each one of them 
contributes $k$ separating lines on the convex hull (by the lines generated by the internal point 
and the $k$ points of the convex hull, see Figure \ref{intersect_hull}, Line (1)). 
Moreover, Each pair of them 
contributes $2$ to the number of separating lines 
(by the line generated by this pair of points, 
see Figure \ref{intersect_hull}, Line (2)).

\begin{figure}[h]
\epsfysize=3cm
\centerline{\epsfbox{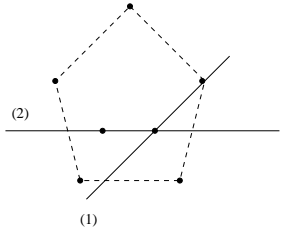}}
\caption{Examples for lines intersecting the convex hull}\label{intersect_hull}
\end{figure}

Hence we have the following total number of separating lines on the convex hull:
$$k(n-k)+2 \cdot \; {(n-k)(n-k-1) \over 2}=(n-k)(n-1)=n^2-(k-1)n+k$$
as needed.
\end{proof}

\begin{lem}
Let $C$ be a configuration of $n$ points. Assume that the points 
$P_{i_1}, P_{i_2}, \cdots , P_{i_k}$ form its convex hull in this order. 
Let $\sigma$ be a permutation in $S_k$ (the symmetric group on $k$ elements).
Then: 
$$\min_{\sigma\in S_k} \left( S(\mathcal P))_{i_{\sigma(1)} i_{\sigma(2)}} + \cdots + 
(S(\mathcal P))_{i_{\sigma(k-1)} i_{\sigma(k)}} + 
(S(\mathcal P))_{i_{\sigma(k)} i_{\sigma(1)}} \right) = n^2 -(k+1)n+k$$
\end{lem}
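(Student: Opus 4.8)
The plan is to reduce the whole cyclic sum to a count over the lines of the configuration and then minimize it line by line. For a permutation $\sigma\in S_k$ write $\Sigma_\sigma$ for the cyclic sum in the statement. First I would regroup the separations according to the line that produces them:
$$\Sigma_\sigma=\sum_{L}c_\sigma(L),$$
where $L$ ranges over the lines generated by pairs of points of the configuration, and $c_\sigma(L)$ is the number of edges $(P_{i_{\sigma(t)}},P_{i_{\sigma(t+1)}})$ of the $\sigma$-cycle that $L$ separates (indices read cyclically). The one delicate point is that the definition of $n(P,Q)$ excludes the lines through $P$ or $Q$. I would check that this exclusion changes nothing: if $L$ passes through an endpoint of an edge, that endpoint lies on $L$ and the edge is not separated by $L$ anyway. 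Hence $c_\sigma(L)$ counts exactly the cycle-edges \emph{strictly} crossed by $L$ (neither endpoint on $L$), uniformly whether $L$ runs through two interior points, one interior and one hull point, or two hull points, and the interchange of summation gives the displayed identity.

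Next I would fix a line $L$ and colour each of the $k$ hull vertices by its position relative to $L$: colour $A$ and $B$ for the two open half-planes, and colour $O$ for the (at most two) vertices lying on $L$. With this colouring an edge of the $\sigma$-cycle is crossed by $L$ exactly when its two endpoints carry the colours $A$ and $B$, while edges incident to an $O$-vertex are never crossed. Thus $c_\sigma(L)$ equals the number of bichromatic ($A$–$B$) adjacencies in the cyclic word read off from $P_{i_{\sigma(1)}},\dots,P_{i_{\sigma(k)}}$. The essential remark is that this colouring depends only on $L$, never on $\sigma$; the permutation merely permutes the letters around the cycle.

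The problem is now purely combinatorial. For a fixed multiset of colours with $a$ letters $A$, $b$ letters $B$ and $o$ letters $O$, the number of $A$–$B$ adjacencies in any cyclic arrangement is $0$ when $a=0$ or $b=0$, and otherwise is at least $\max(0,2-o)$; this follows by counting colour changes around the cycle, clustering the $A$'s and the $B$'s and noting that each $O$ inserted at a transition removes at most one $A$–$B$ adjacency, of which there are at least two. The key geometric observation is that the convex order attains this lower bound \emph{for every line simultaneously}: since a line meets the boundary of a convex polygon in at most two points, in the convex order the $A$-vertices and the $B$-vertices each form a single contiguous arc, so the cyclic word has the shape $A\cdots A\,B\cdots B$ (interrupted only by the at most two $O$'s), which realizes the minimum number of $A$–$B$ adjacencies. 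Hence $c_\sigma(L)\ge c_{\mathrm{id}}(L)$ for every line $L$, where $\mathrm{id}$ denotes the convex order.

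Summing this inequality over all lines yields $\Sigma_\sigma\ge\Sigma_{\mathrm{id}}$ for every $\sigma\in S_k$, and the previous lemma evaluates the right-hand side as $\Sigma_{\mathrm{id}}=n^2-(k+1)n+k$. Therefore the minimum over $S_k$ is attained at the convex order and equals $n^2-(k+1)n+k$, as claimed. The main obstacle I anticipate is the bookkeeping for lines through one or two hull vertices (the cases $o=1$ and $o=2$): one must confirm both that such lines are correctly discarded by the definition of $n(P,Q)$ and that the convex order still attains the per-line minimum in these boundary cases. Once those cases are verified, the clean combinatorial minimization above carries the argument.
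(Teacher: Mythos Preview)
Your argument is correct and follows essentially the same line-by-line strategy as the paper: write the cyclic sum as $\sum_L c_\sigma(L)$ and show that the convex order minimises each term $c_\sigma(L)$ separately, then invoke the previous lemma for the value $n^2-(k+1)n+k$. The paper carries this out by splitting into three cases according to whether $L$ is determined by two interior points, one interior and one hull point, or two hull points (and in the last case uses that any non-convex Hamiltonian cycle on the hull vertices is self-intersecting to get a strict inequality), whereas your colouring/cyclic-word formulation treats all three cases uniformly and more rigorously; the underlying idea is the same.
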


\begin{proof}
We will compare the number of hits on paths going through these $k$ points exactly once. 
We will show that the number of hits on the convex hull is {\it strictly smaller} than 
the number of hits on a path going through these $k$ points exactly once, which is not 
the convex hull.

\medskip

We have 3 classes of lines:
\begin{enumerate}
\item  Lines determined by one internal point and one point on the convex hull 
(see Line (1) in Figure \ref{intersect_diag}): Such a line intersects the cycle of the convex 
hull exactly once. For any other $k$-cycle, each of these lines hits at least once.
\item Lines determined by two internal points: For the hull, each of these lines 
intersects the hull exactly twice (see Line (2) in Figure \ref{intersect_diag}). 
For any other k-cycle, each of these lines hits at least twice.  
\item Lines determined by two points of the convex hull: These lines do not intersect 
the convex hull, since the convex hull is not self-intersecting. 
But any other cycle is self-intersecting 
and hence these lines will contribute (see the dotted lines in Figure \ref{intersect_diag}).
This contribution yields the ``strictly smaller'' part. 
\end{enumerate}

\begin{figure}[h]
\epsfysize=3cm
\centerline{\epsfbox{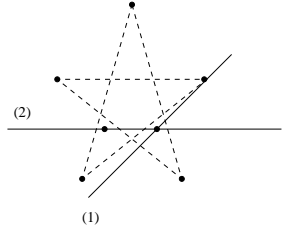}}
\caption{Examples for lines intersecting the hamiltonian cycle which is not convex}\label{intersect_diag}
\end{figure}

Hence we are done.
\end{proof}

Now we will show that if a configuration has a convex hull of size $k$, 
any other $k$ points {\it in convex position} will have less separating lines over 
its convex hull. 

\begin{lem}
Let $\mathcal P$ be a configuration of $n$ points whose convex hull is of size $k$.
Let $P_{i_1},\cdots ,P_{i_k}$ be $k$ points {\bf in convex position} (in this order), which 
do not form the convex hull of $P$. Then: 
$$(S(\mathcal P))_{i_1 i_2} + \cdots + 
(S(\mathcal P))_{i_{k-1} i_k} + (S(\mathcal P))_{i_k i_1} < n^2 -(k+1)n+k.$$ 
\end{lem}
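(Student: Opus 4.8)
The plan is to count $(S(\mathcal P))_{i_1 i_2} + \cdots + (S(\mathcal P))_{i_k i_1}$ as a sum of contributions over the generating pairs, exactly in the style of the hull-count lemma above, and then to show that the presence of a point outside the $k$-gon forces a strict loss. This is the $k$-gon refinement of the observation used in Lemma \ref{non-ch-position-lemma}, where for a triangle a single exterior point already kills at least one separating line.

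First I would fix the convex $k$-gon $Q = \mathrm{conv}(P_{i_1},\ldots,P_{i_k})$ with boundary edges $P_{i_a}P_{i_{a+1}}$ (indices mod $k$), and split the remaining $n-k$ points into $m$ points lying \emph{inside} $Q$ and $p$ points lying \emph{outside} $Q$; since $Q$ is not the convex hull of $\mathcal P$, we have $p \ge 1$. For a generating pair $\{A,B\}$, let $c(A,B)$ be the number of edges of $Q$ separated by the line $\overline{AB}$ that are \emph{not} incident to $A$ or $B$. Then the sum in question equals $\sum_{\{A,B\}} c(A,B)$, because the edge $P_{i_a}P_{i_{a+1}}$ is counted by $(S(\mathcal P))_{i_a i_{a+1}}$ precisely for those separating pairs that avoid $P_{i_a}$ and $P_{i_{a+1}}$.

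Next I would evaluate $c$ by the type of pair, reproducing the hull bookkeeping. A pair of two vertices of $Q$ gives $0$, since a chord of a convex polygon meets its boundary only at the two endpoints. A vertex together with an inside point gives $1$: the line through the vertex and the interior point leaves $Q$ through exactly one far edge. A pair of inside points, or an inside point together with an outside point, each gives $2$, because the line passes through an interior point and is therefore a secant meeting exactly two edges, neither incident to a vertex of the pair. Finally, a pair of two outside points gives at most $2$. Summing these, the inside-related contributions reproduce $km + m(m-1) + 2mp$, and the two-outside contributions are at most $p(p-1)$.

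The crux, and the main obstacle, is the vertex-and-outside-point count. Here I would invoke the two supporting (tangent) lines from an external point $P$ to the convex polygon $Q$: by general position they touch $Q$ at exactly two vertices, and for each such vertex $V$ the line $\overline{PV}$ meets $Q$ only at $V$, contributing $0$; for each of the remaining $k-2$ vertices the line $\overline{PV}$ is a secant crossing exactly one further edge, contributing $1$. Thus every outside point contributes only $k-2$ here, in contrast to the $k$ contributed by an inside point. Assembling all types yields
$$(S(\mathcal P))_{i_1 i_2} + \cdots + (S(\mathcal P))_{i_k i_1} \le k(n-k) + (n-k)(n-k-1) - 2p = n^2 - (k+1)n + k - 2p,$$
and since $p \ge 1$ this is strictly smaller than $n^2 - (k+1)n + k$, as required. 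The delicate points to verify carefully are that general position forces each external supporting line to touch $Q$ at a single vertex (ruling out a tangent edge) and that a secant through a vertex crosses exactly one additional edge; both follow from convexity together with the no-three-collinear hypothesis.
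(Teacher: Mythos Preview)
Your proof is correct and follows essentially the same idea the paper uses: since the $k$-gon $Q$ is not the convex hull, there is at least one configuration point outside $Q$, and the supporting lines from that point to $Q$ fail to cross the boundary, forcing the separating-line count strictly below the hull value. The paper's own proof is literally the one-line remark that the argument of Lemma~\ref{non-ch-position-lemma} carries over.

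The difference is one of explicitness rather than method. The paper asserts the upper bound $n^2-(k+1)n+k$ for any convex $k$-gon and then notes that an exterior point kills at least one vertex--point line. You redo the bookkeeping type by type and obtain the sharper quantitative statement
\[
(S(\mathcal P))_{i_1 i_2} + \cdots + (S(\mathcal P))_{i_k i_1} \;\le\; n^2-(k+1)n+k \;-\; 2p,
\]
where $p\ge 1$ is the number of configuration points outside $Q$: each exterior point contributes only $k-2$ (not $k$) via the vertex--point lines, because exactly two of those lines are tangent. The delicate verifications you flag (tangents meet $Q$ in a single vertex; a non-supporting line through a vertex meets exactly one further edge) are exactly right and follow from convexity plus the no-three-collinear hypothesis.
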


The proof of this lemma uses the same argument as the proof 
of Lemma \ref{non-ch-position-lemma}.

Similarly, it is easy to see the following:

\begin{lem}
Let $\mathcal P$ be a configuration of $n$ points whose convex hull is of size $k$.
Let $3 \leq m \leq n$, $m \not =k$. Let $P_{i_1},\cdots ,P_{i_m}$ be $m$ points in convex 
position (in this order). Then: 
$$(S(\mathcal P))_{i_1 i_2} + \cdots + 
(S(\mathcal P))_{i_{m-1} i_m} + (S(\mathcal P))_{i_m i_1} < n^2 -(m+1)n+m.$$ 
\end{lem}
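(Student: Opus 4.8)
The plan is to recast the counting scheme behind the earlier lemma that computes the expected number of separating lines on a convex hull, but now to read that computation as an \emph{upper bound} valid for an arbitrary convex $m$-gon, and then to extract a strict deficit from a configuration point lying outside this polygon; this extends the argument of Lemma~\ref{non-ch-position-lemma}. Write $G$ for the convex $m$-gon bounded by the cycle $P_{i_1}P_{i_2}\cdots P_{i_m}P_{i_1}$, let $V=\{P_{i_1},\ldots,P_{i_m}\}$ be its vertices, and let $W=\mathcal P\setminus V$ be the remaining $n-m$ points. First I would rewrite the left-hand side: a line separates the endpoints of an edge $e$ of $G$ exactly when it meets the relative interior of $e$, and a line meeting the interior of $e$ cannot pass through either endpoint of $e$, so the exclusion built into the definition of $n(\cdot,\cdot)$ is automatic. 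Summing over the $m$ edges and interchanging the order of summation, the quantity in question equals $\sum_{L}\#\{\text{edges of }G\text{ whose interior }L\text{ crosses}\}$, where $L$ ranges over all lines generated by pairs of $\mathcal P$.

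Next I would bound each line's contribution by splitting the generating pairs into three types, exactly as in the earlier lemma. A line through two vertices of the convex polygon $G$ meets $\partial G$ only at those two vertices, so it crosses no edge interior and contributes $0$. A line through a vertex and a point of $W$ passes through one vertex, and since a line meets the boundary of a convex region in at most two points, it can cross at most one further edge interior, contributing at most $1$. A line through two points of $W$ crosses the convex boundary either zero or two times, hence contributes at most $2$. The number of pairs of each type is fixed, depending only on $V$ and $W$ and not on positions, so summing these bounds gives $m(n-m)+2\cdot{(n-m)(n-m-1)\over 2}=(n-m)(n-1)=n^2-(m+1)n+m$, now as an upper bound for the left-hand side rather than an exact value.

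Finally I would produce the strict inequality. Because $m\neq k$, the polygon $G$ cannot contain all of $\mathcal P$: if it did, then $G$ would be a convex set containing every point and hence would contain the true hull, while its vertices, being points of $\mathcal P$, lie inside the hull, forcing $G$ to coincide with the hull and $m=k$, a contradiction. Thus some point $P^\ast\in W$ lies strictly outside $G$. The two tangent lines from $P^\ast$ to $G$ touch $G$ at vertices and leave the polygon entirely on one side, so each corresponding mixed line crosses no edge interior and contributes $0$ rather than the $1$ permitted above; this strict deficit yields the desired strict inequality $<n^2-(m+1)n+m$. The step that needs the most care is the per-line bound together with the degenerate cases of lines incident to a vertex, but general position---no three points collinear---rules these out and keeps every contribution at its stated maximum; the existence of $P^\ast$ is then the only genuinely geometric input, and it is precisely the feature distinguishing a convex $m$-gon that is the hull from one that is not.
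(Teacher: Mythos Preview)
Your argument is correct and follows essentially the same route the paper intends: the paper proves this lemma by saying ``similarly'' and pointing back to Lemma~\ref{non-ch-position-lemma}, whose proof observes that the computed value is the maximal possible edge-crossing count for a convex $m$-gon and that any point outside the polygon forces at least one vertex--point line to miss the boundary. Your version carries this out more carefully---splitting lines into the three types exactly as in the paper's hull-count lemma, justifying the exclusion in $n(\cdot,\cdot)$ via the edge-interior reformulation, and identifying the deficit with the two supporting lines from $P^\ast$---but the underlying idea is the same.
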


Based on these lemmas, one can try to compute the convex hull from the separating
matrix by using a similar algorithm to the case of convex hull of size $3$: 

\begin{algo}
\begin{enumerate}
\item Set k:=3.
\item Choose $i_1,i_2,\cdots, i_k$ such that $1 \leq i_1<i_2<\cdots <i_k \leq n$
\item If $$\min_{\sigma\in S_k} \left( S(\mathcal P))_{i_{\sigma(1)} i_{\sigma(2)}} + \cdots + 
(S(\mathcal P))_{i_{\sigma(k-1)} i_{\sigma(k)}} + 
(S(\mathcal P))_{i_{\sigma(k)} i_{\sigma(1)}} \right) = n^2 -(k+1)n+k,$$
 then return: ''Convex hull is of size $k$ and it is $P_{i_1},P_{i_2},\cdots,P_{i_k}$''.
\item If for all $1 \leq i_1<i_2< \cdots < i_k \leq n$, the condition in (3) is not
satisfied, then $k:=k+1$ and return to Step (2).
\end{enumerate}
\end{algo}

For covering all the subsets of $k$ points out of the $n$ points of the configuration,
we have used the implementation of \cite{LHS}.
 
The problem of this algorithm is that it found ``fake'' convex hulls, i.e. 
one can have $m$ points ($m \neq k$) NOT in convex position which still yield 
the correct minimal number of separating lines on its ``convex hull''.

For example, in Figure \ref{conf_7_4}, the real convex hull of size $5$ is dashed 
and the $4$ black points are the ``fake'' convex hull (notice that they 
are not in convex position). 

\begin{figure}[h]
\epsfysize=8cm
\centerline{\epsfbox{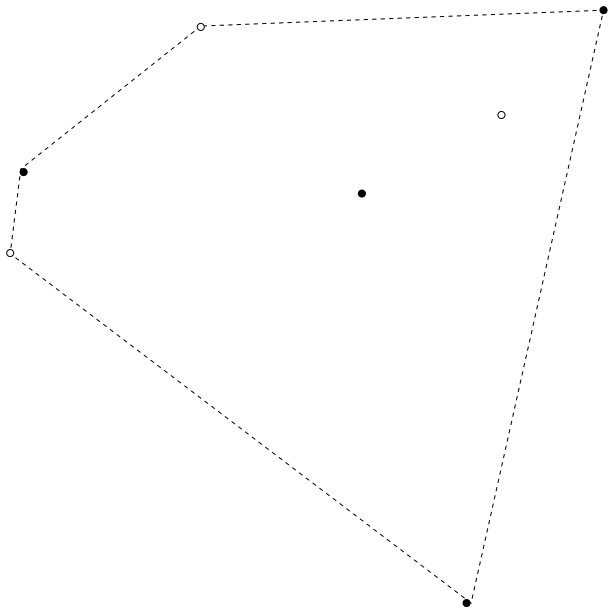}}
\caption{A configuration with a ``fake'' convex hull}\label{conf_7_4}
\end{figure}

Out of 135 configurations of 7 points in general position, one gets 13 such ``fake'' 
convex hulls (for less than 7 points, there were no ``fake'' convex hulls). 
We have used the databse of Aichholzer and Krasser (see \cite{AK}).
 
\medskip

So, our next aim is to find how can we outrule the fake convex hulls and keep 
only the real ones. 
 
One can try the following probabilistic way to rule out these ``fakes''. 
One can imagine that the sum of separating lines over the $n-1$ lines going 
out from a point on the convex hull (i.e. the sum of entries in the row corresponding 
to a point in the convex hull) will be higher than (or at least equal to) 
this corresponding sum for a point not on the convex hull (i.e. the sum of entries 
in the row corresponding to a point not in the convex hull).   

For $n=7$ and $n=8$ points, this check indeed rules out the ``fake'' convex hulls.   
For $n=9$ points, it also rules out the ``fake'' convex hulls, but it also rules out 
the correct convex hull in 28 cases (out of 158817 cases), since 
the corresponding sum of one of the points of the convex hull is strictly smaller 
than this sum for an internal point.

\medskip

One direction for suggesting a different algorithm for this problem is the following: 
once we find out that a case of $k$ points 
that satisfies the correct number of separating lines, but fails to satisfy 
the maximal row sum condition (see two paragraphs above), make an extra check: 
if in this case we have that 
the corresponding sums of the least $m$ rows of the convex hull are equal 
($m$ can be equal to $1$), then check if the corrsponding rows of these $k$ points 
are amongst the highest $k+m$ rows. If so, these $k$ points form the convex hull.

This check will fail if there will be a ``fake'' also here: $k$ points which has the 
correct number of separating lines and satisfy the maximal row sum condition 
(such a ``fake'', if exists, has at least $10$ points). 

\medskip

We now show that the cycle consisting of the points of the real convex hull 
(in order), combined with any number of internal points cannot be considered 
as a ``fake'' convex hull.
 
\begin{lem}
Let $\mathcal P$ be a configuration of $n$ points whose convex hull is of 
size $k$. Let $P_{i_1},\cdots ,P_{i_k}$ be the $k$ points of the convex hull 
(in this order) of $P$. Let $P_{i_m}$ be an internal point. 
Consider the cycle $P_{i_1},\cdots  ,P_{i_k}, P_{i_m}$.
 Then 
$$(S(\mathcal P))_{i_1 i_2} + (S(\mathcal P))_{i_2 i_3} + \cdots + 
(S(\mathcal P))_{i_{k-1} i_k} + (S(\mathcal P))_{i_k i_m} + (S(\mathcal P))_{i_m i_1}
> n^2 -(k+2)n+ (k+1).$$ 
\end{lem}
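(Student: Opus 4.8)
The plan is to compare the given $(k+1)$-cycle with the genuine convex-hull cycle and to reduce the whole statement to a single ``triangle inequality'' for the separating function $n(\cdot,\cdot)$. First I would note that the cycle $P_{i_1},\ldots,P_{i_k},P_{i_m}$ differs from the convex-hull cycle $P_{i_1},\ldots,P_{i_k}$ only in that the closing edge $P_{i_k}P_{i_1}$ is removed and the two edges $P_{i_k}P_{i_m}$ and $P_{i_m}P_{i_1}$ are inserted. Writing $N$ for the left-hand side of the asserted inequality and using the earlier lemma, which gives $n^2-(k+1)n+k$ for the sum of the separating lines over the real convex hull of size $k$, this yields
$$N=\left(n^2-(k+1)n+k\right)-(S(\mathcal P))_{i_k i_1}+(S(\mathcal P))_{i_k i_m}+(S(\mathcal P))_{i_m i_1}.$$
A one-line computation (the two polynomial expressions differ by $n-1$) then shows that the target inequality $N>n^2-(k+2)n+(k+1)$ is equivalent to
$$(S(\mathcal P))_{i_k i_1}<(n-1)+(S(\mathcal P))_{i_k i_m}+(S(\mathcal P))_{i_m i_1}. \qquad (\star)$$

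To establish $(\star)$ I would prove the following inequality, valid for any three points $A,B,C$ of a configuration in general position (applied with $A=P_{i_1}$, $B=P_{i_k}$, $C=P_{i_m}$):
$$n(A,B)\le n(A,C)+n(C,B)+(n-3).$$
The proof classifies the lines counted by $n(A,B)$ --- that is, lines through pairs of points different from $A$ and $B$ which separate $A$ and $B$ --- according to whether or not they pass through $C$. The lines through $C$ and a further point number at most $n-3$ (one for each point other than $A,B,C$), which accounts for the term $n-3$. Every other such line is determined by two points different from $A,B,C$; since it separates $A$ and $B$ it meets the open side $AB$ of the non-degenerate triangle $ABC$, and a line meeting the boundary of a triangle away from its vertices crosses that boundary exactly twice. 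Hence it also crosses $AC$ or $CB$, so it is counted by $n(A,C)$ or by $n(C,B)$; being incident to none of $A,B,C$, it is an admissible contributor there. Summing gives the inequality, and since $n-3<n-1$ the inequality $(\star)$ follows at once, which proves the lemma.

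I expect the main obstacle to be the careful bookkeeping of the exclusions built into the definition of $n(\cdot,\cdot)$: each entry counts only lines through pairs of points distinct from its two indices, so I must confirm that a line re-assigned from $n(A,B)$ to $n(A,C)$ or $n(C,B)$ is genuinely admissible there, which is guaranteed because general position forbids such a line (already through two points other than $C$) from passing through $C$ as well. A secondary point is the even-crossing claim, which needs $ABC$ to be a genuine triangle; this is exactly where general position is used, ensuring that a line crossing side $AB$ exits through precisely one of the remaining two sides and is therefore charged to one of $n(A,C)$, $n(C,B)$.
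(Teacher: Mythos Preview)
Your argument is correct, but it follows a genuinely different route from the paper's. The paper proves the lemma by a direct count on the $(k+1)$-cycle itself: the $n-k-1$ points off the cycle each contribute at least $k+1$ via the lines joining them to the cycle vertices, pairs of off-cycle points each contribute at least $2$, and the two new edges $P_{i_k}P_{i_m}$ and $P_{i_m}P_{i_1}$ themselves act as separating lines for at least one further edge each; summing gives $(n-k-1)(n-1)+2=n^2-(k+2)n+(k+3)$. You instead compare the new cycle with the convex-hull cycle using the earlier lemma, reduce everything to the single inequality $n(A,B)\le n(A,C)+n(C,B)+(n-3)$, and prove that by the two-crossing property of a line meeting a triangle. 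Your reduction is clean, isolates a reusable ``triangle inequality'' for the separating function, and (if one keeps the sharper constant $n-3$ rather than $n-1$) recovers exactly the same explicit lower bound $n^2-(k+2)n+(k+3)$ as the paper. The paper's direct-count approach, on the other hand, extends without modification to the case of appending several internal points to the hull cycle, which is precisely the generalisation the paper notes after the proof; your reduction would need to be iterated to cover that.
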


\begin{proof}
We have $n-k-1$ points which are not on this cycle. Each one of them 
contributes at least $k+1$ separating lines to this cycle 
(by the lines generated by the points on this cycle with those not on this cycle).
Moreover, each pair of them contributes at least $2$ to the number of 
separating lines (by the line generated by this pair of points). 
Additionally, the line connecting
$P_{i_m}$ to $P_{i_k}$, and the line connecting  $P_{i_m}$ to $P_{i_1}$ each 
contribute one separating line to the cycle. 

Hence we have the following number of separating lines on the cycle:
$$(k+1)(n-k-1)+2 \cdot {(n-k-1)(n-k-2) \over 2} + 2=(n-k-1)(n-1)+2=$$
$$=n^2-(k+2)n + (k+3) > n^2-(k+2)n + (k+1) $$ 
as needed.



\end{proof}

The same argument will apply if one adds any number of points to the cycle 
of the convex hull.

\section*{Acknowledgements}
We thank Roland Bacher and Rom Pinchasi for fruitful discussions.

\end{document}